\newtheorem{theorem}{Theorem}[section]
\newtheorem{prop}[theorem]{Proposition}
\newtheorem*{Theorem1'}{Theorem 1'}
\theoremstyle{definition}
\theoremstyle{remark}
\newcommand \GL{{\mathrm{GL}}}
\newcommand \Z{{\mathbb Z}}
\newcommand \N{{\mathbb N}}
\def\a{\alpha}
\def\Hol{{\mathrm{Hol}}}
\def\b{\beta}
\begin{document}

\title[Sums of binomial coefficients modulo $p$ and  
groups of exponent $p^n$]{Sums of binomial coefficients in Pascal's triangle taken modulo $p$ and their relation to 
groups of exponent $p^n$}

\author{Fernando Szechtman}
\address{Department of Mathematics and Statistics, University of Regina, Canada}
\email{fernando.szechtman@gmail.com}
\thanks{The author was partially supported by NSERC grant 2020-04062}

\subjclass[2020]{11B65, 11A07, 20D15, 05A10}

\keywords{Pascal's triangle, binomial coefficients, groups of prime-power exponent}

\begin{abstract} We give a simple matrix-based proof of congruence equations modulo a prime~$p$
involving sums of binomial coefficients appearing in Pascal's triangle. These equations can be used to construct
some groups of exponent $p^n$. These groups, as well as others of exponent $p^{n+1}$, explain why $p=2$ is not really
an exceptional prime in relation to the Heisenberg group over the field with $p$ elements.
\end{abstract}

\maketitle

\section{Introduction}

It is common for the prime 2 to play the role of villain in Mathematics. We consider one such a case
and then we use the sum of binomial coefficients appearing in Pascal's triangle taken
modulo a prime to show that, in fact, all primes behave in the exactly the same manner.

Some notation is required for this purpose. We fix throughout $p,m\in\N$, with $p$ a prime, and write $F$ for the field with $p$ elements. 
We let $J\in M_m(F)$ and $A=I+J\in M_m(F)$
stand for the upper triangular Jordan blocks with eigenvalues 0 and 1, respectively.
We also fix an $F$-vector space $V$ of dimension $m$. For $v\in V$ and 
$\gamma\in\mathrm{End}(V)$, we write $v^\gamma=v\gamma$ for the result of applying $\gamma$ to $v$. In keeping with this
notation function composition is considered from left to right, and the matrix of an endomorphism of $V$ relative to a basis of $V$ is constructed row by row instead of column by column. Let $G$ be a group. If $x,y\in G$, we write $x^y=y^{-1}xy$. If $G$ is  finite,
its exponent
is the smallest natural number $m$ such that $x^m=1$ for all $x\in G$. Given a subgroup $H$ of $\mathrm{Aut}(G)$, we  write $\Hol(G,H)$
for the holomorph of $G$ relative to $H$. This is the group having (copies of) $G$ and~$H$ as subgroups, with $G$ normal,
$G\cap H$ trivial, $\Hol(G,H)=HG$, and if $g\in G$ and $\a\in H$, then
$\a^{-1}g\a=g^\a$ is the result of applying $\a$ to $g$. If $H=\langle\a\rangle$, we write $\Hol(G,\a)$ instead of $\Hol(G,H)$.

Consider the Heisenberg
group $H(p)$, namely the group of all $3\times 3$ upper triangular matrices with 1's along the main diagonal.
Then $H(p)\cong\Hol(V,\a)$, where $m=2$ and $\a\in\GL(V)$ can be represented by $A$ relative to some basis of $V$.
It is well known and easy to see that the exponent of $H(p)$ is $p$ when $p$ is odd and 4 when $p=2$.
The reason for this dichotomy is the fact that $1+\cdots+p-1$ is congruent to 0 or 1 modulo $p$ depending on whether $p$ is odd
or $p=2$, respectively. There seems to be nothing that can salvage the prime 2 from its unique role in this setting.
However, a wider view of Pascal's triangle reveals 
that all primes are equally exceptional.

Perhaps the two most well-known results concerning binomial coefficients and their divisibility by a given prime are the following theorems
of Kummer \cite{K} and Lucas \cite{L}.

\medskip

\noindent{\bf Theorem} (Kummer). Let $p$ be a prime. Given integers $0\leq m\leq n$ the number of times that $p$ divides
the binomial coefficient $\binom{n}{m}$ is the amount of carries when adding $m$ and $n-m$ in base $p$.

\medskip

\noindent{\bf Theorem} (Lucas). Let $p$ be a prime. Given integers $0\leq m\leq n$, such that $m=(m_\ell\dots m_1m_0)_p$
and $m=(n_\ell\dots n_1n_0)_p$, we have
$$
\binom{n}{m}\equiv \underset{0\leq i\leq\ell} \Pi \binom{n_i}{m_i}\mod p,
$$
where $\binom{0}{0}=1$ and $\binom{a}{b}=0$ if $a<b$.

\medskip

In these note we are concerned with {\em sums} of binomial coefficients when taken modulo a prime, with an application to
group theory.

\section{Congruence equations mod $p$ in Pascal's triangle and groups of exponent $p^n$}

\begin{prop}\label{min} Suppose that $n\in\N$ satisfies $m<p^n$. Then 
\begin{equation}\label{poly0}
I+A+\cdots+A^{p^n-1}=0.
\end{equation}
\end{prop}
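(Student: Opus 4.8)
The plan is to expand each power $A^j=(I+J)^j$ by the binomial theorem, sum these up for $j=0,\dots,p^n-1$, and recognize the resulting coefficients as partial column sums in Pascal's triangle, which are evaluated by the hockey-stick identity.

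First I would record that $J\in M_m(F)$ is the $m\times m$ nilpotent Jordan block, so $J^m=0$ and hence $J^i=0$ for every $i\geq m$. Since $I$ and $J$ commute, the binomial theorem gives, for each integer $j\geq 0$,
\[
A^j=(I+J)^j=\sum_{i=0}^{m-1}\binom{j}{i}J^i .
\]
Summing over $j=0,1,\dots,p^n-1$ and interchanging the two finite sums yields
\[
I+A+\cdots+A^{p^n-1}=\sum_{i=0}^{m-1}\Bigl(\sum_{j=0}^{p^n-1}\binom{j}{i}\Bigr)J^i .
\]

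Next I would evaluate the inner sum. By the hockey-stick identity $\sum_{j=i}^{N-1}\binom{j}{i}=\binom{N}{i+1}$, together with $\binom{j}{i}=0$ for $j<i$, the coefficient of $J^i$ is exactly $\binom{p^n}{i+1}$. Here the hypothesis $m<p^n$ enters: for $0\leq i\leq m-1$ we have $1\leq i+1\leq m<p^n$, so $0<i+1<p^n$, and therefore $p$ divides $\binom{p^n}{i+1}$ --- by Kummer's theorem, since adding $i+1$ and $p^n-(i+1)$ in base $p$ must produce at least one carry (the sum $p^n$ has a digit that neither summand has), or equally by Lucas, or simply from the elementary identity $(i+1)\binom{p^n}{i+1}=p^n\binom{p^n-1}{i}$. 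Hence every coefficient is $0$ in $F$, and the whole sum is $0$, as claimed.

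The argument has no genuinely hard step; the one point requiring care is the bookkeeping on the range $0\leq i\leq m-1$, because it is precisely the inequality $i+1\leq m<p^n$ that forces each $\binom{p^n}{i+1}$ to be divisible by $p$, and this is the sole place the assumption $m<p^n$ is used. As a consistency check, taking $j=p^n$ in the same expansion gives $A^{p^n}=I+J^{p^n}=I$, which is compatible with the telescoping identity $(A-I)(I+A+\cdots+A^{p^n-1})=0$.
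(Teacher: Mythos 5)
Your proof is correct, but it takes a genuinely different route from the paper. The paper argues in one line with the minimal polynomial: since $m<p^n$, the minimal polynomial $(X-1)^m$ of $A$ divides $(X-1)^{p^n-1}=(X^{p^n}-1)/(X-1)=1+X+\cdots+X^{p^n-1}$ in $F[X]$ (the middle equality being the freshman's dream in characteristic $p$), so $A$ annihilates that polynomial. You instead expand each $A^j=(I+J)^j$ by the binomial theorem, apply the hockey-stick identity to identify the coefficient of $J^i$ as $\binom{p^n}{i+1}$, and invoke the divisibility of $\binom{p^n}{k}$ by $p$ for $0<k<p^n$; all of these steps are sound, and the elementary identity $(i+1)\binom{p^n}{i+1}=p^n\binom{p^n-1}{i}$ you cite suffices without Kummer or Lucas. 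The trade-off is worth noting in the context of this paper: by hockey-stick, your coefficient $\binom{p^n}{i+1}$ equals exactly the sum $S_{p^n,i}$ of Proposition 2.2, so your argument amounts to proving the congruence \eqref{rj} directly from classical binomial-coefficient facts and then reading off the matrix identity \eqref{poly0}. That is perfectly valid as a standalone proof, but it reverses the paper's intended direction of deduction, whose point is to obtain \eqref{rj} as a consequence of the linear-algebra statement \eqref{poly0}; the paper's minimal-polynomial argument is also shorter and uses no binomial arithmetic at all, whereas yours is more computational but makes the link to Pascal's triangle explicit from the start.
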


\begin{proof} The minimal polynomial of $A$ is $(X-1)^m$, which is a factor of 
\begin{equation}\label{poly}
1+X+\cdots+X^{p^n-1}=(X^{p^n}-1)/(X-1)=(X-1)^{p^n}/(X-1)=(X-1)^{p^n-1}.
\end{equation}
\end{proof}

\begin{prop} Suppose that $n\in\N$ and $i\in\Z$ satisfy $0\leq i<p^n-1$, and set
$$
S_{p^n,i}=\binom{i}{i}+\binom{i+1}{i}+\cdots+\binom{p^n-1}{i}.
$$
Then 
\begin{equation}\label{rj}
S_{p^n,i}\equiv 0\mod p.
\end{equation}
\end{prop}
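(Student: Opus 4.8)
The plan is to read \eqref{rj} off from the matrix identity \eqref{poly0} of Proposition~\ref{min}, by expanding the powers of $A=I+J$ via the binomial theorem and then comparing the coefficients of the powers of $J$. Proposition~\ref{min} applies to the Jordan block of every size below $p^n$; I apply it to the one of size $p^n-1$, so to keep notation uniform I take $m=p^n-1$, which is the largest size permitted by the hypothesis $m<p^n$.

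First, because $J^m=0$ and $I$ commutes with $J$, the binomial theorem gives, for every integer $k\geq 0$,
$$
A^k=(I+J)^k=\sum_{i=0}^{m-1}\binom{k}{i}J^i
$$
in $M_m(F)$, the binomial coefficients being reduced modulo $p$ and the terms with $i>k$ vanishing since $\binom{k}{i}=0$ there. Summing over $0\leq k\leq p^n-1$ and interchanging the two finite sums,
$$
\sum_{k=0}^{p^n-1}A^k=\sum_{i=0}^{m-1}\Big(\sum_{k=0}^{p^n-1}\binom{k}{i}\Big)J^i=\sum_{i=0}^{m-1}S_{p^n,i}\,J^i,
$$
where the inner sum equals $S_{p^n,i}$ because $\binom{k}{i}=0$ for $0\leq k<i$. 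By Proposition~\ref{min} the left-hand side is $0$, so $\sum_{i=0}^{m-1}S_{p^n,i}J^i=0$. Now $I,J,J^2,\dots,J^{m-1}$ are $F$-linearly independent, each $J^i$ being supported on the $i$-th superdiagonal and these superdiagonals being pairwise disjoint; hence every coefficient vanishes, i.e. $S_{p^n,i}\equiv 0\bmod p$ for $0\leq i\leq m-1=p^n-2$. This is exactly \eqref{rj}.

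There is essentially no hard step here; the one point that needs attention is the choice of the size of the Jordan block. One must take it to be $p^n-1$ (not the globally fixed $m$, which could be anything), since only then do the coefficients $S_{p^n,0},\dots,S_{p^n,p^n-2}$ all appear in the expansion and get forced to zero, whereas a smaller block would yield the congruence only for a shorter initial range of $i$. For comparison, one may note that the hockey-stick identity gives $S_{p^n,i}=\binom{p^n}{i+1}$, so \eqref{rj} also follows from Kummer's or Lucas's theorem applied to $\binom{p^n}{i+1}$ with $0<i+1<p^n$; but the matrix derivation above uses nothing beyond Proposition~\ref{min}, in keeping with the stated aim of the paper.
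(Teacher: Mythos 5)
Your argument is correct and is essentially the paper's own proof: take $m=p^n-1$, expand $I+A+\cdots+A^{p^n-1}$ in powers of $J$ via the binomial theorem, invoke Proposition~\ref{min}, and conclude from the linear independence of $I,J,\dots,J^{p^n-2}$ (the paper keeps the term $S_{p^n,p^n-1}J^{p^n-1}$, which vanishes since $J^{p^n-1}=0$, while you drop it at the outset). The hockey-stick remark is a nice aside but not part of the paper's argument.
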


\begin{proof} Take $m=p^n-1$. Then $I+A+\cdots+A^{p^n-1}=0$ by Proposition \ref{min}. On the other hand
\begin{equation}\label{triangle2}
I+A+\cdots+A^{p^n-1}=S_{p^n,0}\cdot I+ S_{p^n,1}\cdot J+\cdots+S_{p^n,p^n-2}\cdot J^{p^n-2}+S_{p^n,p^n-1}\cdot J^{p^n-1}.
\end{equation}
For $0\leq i<p^n-1$, $J^i$ is the upper triangular matrix with 1's along the $i$th superdiagonal and 0's everywhere else.
Thus, $I,J,\dots,J^{p^n-2}$ are linearly independent over $F$. Since $J^{p^n-1}=0$, it follows that $S_{p^n,i}\equiv 0\mod p$ for all 
$0\leq i<p^n-1$.
\end{proof}

Note that if $n\in\N$ and $m=p^n-1$, then making use of (\ref{triangle2}), the linear independence of $I,J,\dots,J^{p^n-2}$, and 
$J^{p^n-1}=0$, the equations (\ref{poly0}) and (\ref{rj}) become equivalent to each other.

\begin{prop}\label{h1} Suppose that $n\in\N$ satisfies $p^{n-1}<m\leq p^n$. Let $\a\in\GL(V)$ be an automorphism of $V$ that is represented by $A$ with respect
to some basis of $V$, and set $G=\Hol(V,\a)$. Then $G$ has order $p^{m+n}$ and exponent $p^n$ if $m<p^n$ and $p^{n+1}$ if $m=p^n$. 
\end{prop}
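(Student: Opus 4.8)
The plan is to work with the concrete model $G=\langle\a\rangle\ltimes V$, with $V$ normal and elementary abelian, and to derive a closed formula for powers of a general element. Writing an arbitrary element of $G$ as $g=\a^{j}v$ with $j\in\Z$ and $v\in V$, and using the defining relation $v\,\a^{j}=\a^{j}\,v^{\a^{j}}$ of the holomorph, a straightforward induction on $k$ gives
\begin{equation*}
g^{k}=\a^{jk}\cdot v\bigl(I+\a^{j}+\a^{2j}+\cdots+\a^{(k-1)j}\bigr),
\end{equation*}
where the parenthesised expression is an endomorphism of $V$ applied to $v$. Everything then reduces to understanding the endomorphism $\Sigma_{j,k}=I+\a^{j}+\cdots+\a^{(k-1)j}$ when $k$ is a power of $p$.

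First I would settle the order. Since $\a$ is represented by $A=I+J$, the Frobenius identity $A^{p^{k}}=I+J^{p^{k}}$ shows that $A^{p^{k}}=I$ precisely when $p^{k}\ge m$, so the hypothesis $p^{n-1}<m\le p^{n}$ forces $\a$ to have order $p^{n}$; hence $|G|=|\langle\a\rangle|\cdot|V|=p^{n}p^{m}=p^{m+n}$. In particular the exponent of $G$ is a power of $p$, and it is at least $p^{n}$ since $\a$ itself has order $p^{n}$.

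Next I would bound the exponent from above, applying the power formula with $k=p^{n}$ and, in the case $m=p^{n}$, also with $k=p^{n+1}$; since the leading factor $\a^{jk}$ is trivial as soon as $p^{n}\mid jk$, one only needs $\Sigma_{j,p^{n}}=0$ (resp.\ $\Sigma_{j,p^{n+1}}=0$) in $\mathrm{End}(V)$ for every $j$. I would split on $\gcd(j,p)$. If $p\mid j$, then $q:=\mathrm{ord}(\a^{j})$ is a proper divisor of $p^{n}$, the powers of $\a^{j}$ are periodic with period $q$, and $\Sigma_{j,p^{n}}=(p^{n}/q)\,(I+\a^{j}+\cdots+\a^{(q-1)j})=0$ because $p\mid p^{n}/q$ (the same works with $p^{n+1}$). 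If $\gcd(j,p)=1$, then $\a^{j}$ is represented by the unipotent matrix $A^{j}$, so $(A^{j}-I)^{m}=0$, while the identity $1+X+\cdots+X^{p^{k}-1}=(X-1)^{p^{k}-1}$ from \eqref{poly} gives $\Sigma_{j,p^{k}}=(\a^{j}-I)^{p^{k}-1}$, represented by $(A^{j}-I)^{p^{k}-1}$. When $m<p^{n}$ we have $p^{n}-1\ge m$, so this vanishes and $g^{p^{n}}=1$ for all $g\in G$; combined with the lower bound, the exponent is exactly $p^{n}$. When $m=p^{n}$ we instead use $p^{n+1}-1\ge m$ to conclude $g^{p^{n+1}}=1$ for all $g$.

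Finally, for $m=p^{n}$ I would produce an element of order $p^{n+1}$: taking $j=1$, the endomorphism $\Sigma_{1,p^{n}}=(\a-I)^{p^{n}-1}$ is represented by $J^{\,p^{n}-1}=J^{\,m-1}\ne0$, so choosing $v\in V$ with $v J^{\,m-1}\ne0$ gives $g=\a v$ with $g^{p^{n}}=vJ^{\,m-1}\ne0$ inside $V\subseteq G$; hence $g$ has order $p^{n+1}$, and with the upper bound the exponent is $p^{n+1}$. The main obstacle is not conceptual but a matter of bookkeeping: getting the power formula and the holomorph conventions right, and, in the case $p\mid j$, resisting the use of the polynomial identity $1+X+\cdots+X^{p^{k}-1}=(X-1)^{p^{k}-1}$ (which is special to sums up to a power of $p$) in favour of the periodicity of the powers of $\a^{j}$.
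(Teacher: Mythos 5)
Your proof is correct and takes essentially the same route as the paper: the power formula $(\alpha^j v)^k=\alpha^{jk}\,v^{1+\alpha^j+\cdots+\alpha^{(k-1)j}}$ in $\Hol(V,\alpha)$ combined with the identity $1+X+\cdots+X^{p^k-1}=(X-1)^{p^k-1}$ over $F$, and the same witness $\alpha v$ (the paper's $\gamma v_1$) for the lower bound when $m=p^n$. The only differences are bookkeeping: where you split on $p\mid j$ and argue by periodicity, the paper writes $\beta=\alpha^{p^s j}$ with $\gcd(j,p)=1$ and uses $1+\beta+\cdots+\beta^{p^n-1}=\bigl((\gamma-1)^{p^n-1}\bigr)^{p^s}$ with $\gamma=\alpha^j$ similar to $A$, and it obtains the $p^{n+1}$ upper bound from $V$ having exponent $p$ rather than from the vanishing of $I+\alpha^j+\cdots+\alpha^{(p^{n+1}-1)j}$.
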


\begin{proof} The minimal polynomial of $A$ is $(X-1)^m$. This is not a factor of $(X-1)^{p^{n-1}}=X^{p^{n-1}}-1$,
but it is a factor of $(X-1)^{p^{n}}=X^{p^{n}}-1$, so $\a^{p^{n-1}}\neq 1$ but $\a^{p^{n}}=1$. Thus the order
of $\a$ is $p^n$. As $|V|=p^m$, it follows that $|G|=p^{m+n}$.

Let $v\in V$ and $\b\in\langle \a\rangle$. Then $\b=\a^i$ for some $i\in\N$, so that 
$i=p^s j$, where $j\in\N$, $s\geq 0$, and $\gcd(p,j)=1$. Clearly, $\b^{p^{n}}=1$. Set $\gamma=\a^j$. As
$\gcd(p,j)=1$,
then $\a$ is similar to $\gamma$, so $\gamma$ can also be represented by $A$ relative 
to some other basis, say $B=\{v_1,\dots,v_m\}$, of $V$. Set $\delta=1+\gamma+\cdots+\gamma^{p^{n-1}}$ and $\epsilon=\delta^{p^s}$,
so that $1+\b+\cdots+\b^{p^{n-1}}=\epsilon$ and therefore
\begin{equation}\label{delta}
(\b v)^{p^n}=\b^{p^n} v^{\epsilon}=v^{\epsilon}.
\end{equation}

Suppose first that $m<p^n$. Then $\delta=0$ by Proposition \ref{min}, so $(\b v)^{p^n}=1$ by (\ref{delta}), whence
the exponent of $G$ is exactly $p^{n}$. Suppose next that $m=p^n$. Then by (\ref{delta}),
$$
(\b v)^{p^{n+1}}=(v^{\epsilon})^p=1.
$$
Let $E\in M_m(F)$ be the matrix having a 1 in position $(1,m)$ and 0's elsewhere.
Then $(A-1)^{p^n-1}=J^{p^n-1}=E$, so (\ref{poly}) ensures that $\delta$ is represented by $E$ relative to  $B$,
whence
$$
(\gamma v_1)^{p^{n}}=\gamma^{p^{n}} v_1^{1+\gamma+\cdots+\gamma^{p^{n-1}}}=v_1^{\delta}=v_m.
$$
Therefore the exponent of $G$ is precisely $p^{n+1}$.
\end{proof}

Observe that when $m<p^n$ the fact that the exponent of $G$ is $p^n$ is equivalent to (\ref{poly0}) and (\ref{rj}).
Notice also if $n=1$ and $m=2$, then any odd prime $p$ satisfies $p^{n-1}<m<p^n$ and $G\cong H(p)$.
Note as well that if $n=1$ and $m=2$, then $p=2$ satisfies $m=p^n$ and $G\cong H(p)$.


\begin{thebibliography}{RBMW}

    \small
 

\bibitem[K]{K} E. Kummer,  \emph{\"{U}ber die Erg\"{a}nzungss\"{a}tze zu den allgemeinen Reciprocit\"{a}tsgesetzen}, Journal
f\"{u}r die reine und angewandte Mathematik 44 (1852) 93–-146.

\bibitem[L]{L} \'E. Lucas, \emph{Sur les congruences des nombres eul\'eriens et les coefficients diff\'erentiels des
functions trigonom\'etriques suivant un module premier}, Bulletin de la Soci\'et\'e Math\'ematique
de France 6 (1878) 49–-54.









\end{thebibliography}
\end{document}